\definecolor{darkgreen}{rgb}{0.00,0.50,0.10}
\definecolor{lightgreen}{rgb}{0.20,0.70,0.30}
\newtheorem{theorem}{Theorem}
\newtheorem{lemma}[theorem] {Lemma}   
\newtheorem{conjecture}[theorem] {Conjecture}
\newtheorem{definition}[theorem] {Definition} 
\theoremstyle{remark}
\newcommand{\An}{\textnormal{And}}
\newcommand{\floor}[1]{\ensuremath{\left \lfloor #1 \right \rfloor}}
\title[Bipartification of Andr\'{a}sfai graphs]{The Erd\H{o}s bipartification conjecture is true in the special case of Andr\'{a}sfai graphs}
\author{Peter Heinig}
\address{Zentrum Mathematik, Lehr- und Forschungseinheit M9 f\"{u}r Angewandte Geometrie und Diskrete Mathematik, 
Technische Universit\"at M\"unchen, Boltzmannstra\ss{}e~3, D-85747 Garching bei M\"unchen, Germany} 
\email{heinig@ma.tum.de}
\thanks{The author was supported by TopMath, an Elite Graduate Program of the ENB}
\begin{document}

\maketitle

\begin{abstract} Let the Andr\'{a}sfai graph $\An_k$ be defined as the 
graph with vertex set $\{v_0,v_1,\dotsc, v_{3k-2}\}$ and two vertices $v_i$ 
and $v_j$ being adjacent iff $|i-j| \equiv 1\mod 3$. The graphs $\An_k$ are maximal triangle-free and 
play a role in characterizing triangle-free graphs with large minimum degree as homomorphic preimages.
A minimal bipartification of a graph $G$ is defined as a set of edges $F\subset E(G)$ having 
the property that the graph $(V(G), E(G)\backslash F)$ is bipartite and for every $e \in F$ the 
graph $(V(G), E(G)\backslash (F\backslash e))$ is not bipartite. In this note it is shown that there 
is a minimal bipartification $F_k$ of $\An_k$ which consists of exactly 
$\floor{\frac{k^2}{4}}$ edges. This equals $\floor{\frac{1}{36}\bigl( |\An_k|+1\bigr)^2}$, 
where $|\cdot|$ denotes the number of vertices of a graph. For all $k$ this is consistent with 
a conjecture of Paul Erd\H{o}s that every triangle-free graph $G$ can be made bipartite by deleting 
at most $\frac{1}{25} |G|^2$ edges. 

Bipartifications like $F_k$ may be useful for proving that arbitrary homomorphic preimages 
of an Andr\'{a}sfai graph can be made bipartite by deleting at most $\frac{1}{25} |G|^2$ edges.
\end{abstract}

\section{Introduction}
All notation is standard and follows ~\cite{Diestel}. In particular, if $G$ is a graph then $|G|$ 
denotes the number of its vertices. A \emph{minimal bipartification} of a graph $G$ is defined as 
a set of edges $F\subseteq E(G)$ having the property that the graph $(V(G), E(G)\backslash F)$ is 
bipartite and for every $e \in F$ the graph $(V(G), E(G)\backslash (F\backslash e))$ is not bipartite. 
A \emph{homomorphic preimage} of a graph $G$ is a preimage of $G$ under some graph homomorphism.
The present short note is concerned with the following special class of graphs.
\begin{definition}[Andr\'{a}sfai graphs] For every integer $k\geq 2$ the 
graph $\An_k$ is defined as the graph with vertex set $\{v_0,v_1,\dotsc, v_{3k-2}\}$ 
and two vertices $v_i$ and $v_j$ being adjacent iff $|i-j| \equiv 1\mod 3$. 
\end{definition}
By Lemma 6.10.1 in ~\cite{GodRoy}, every graph $\An_k$ is a triangle-free graph of diameter two, 
which is the same as saying that it is \emph{maximal} triangle-free. 

In the proof below, the following lemma will be used for the inductive step.
\begin{lemma}[Inductive construction of Andr\'{a}sfai graphs]\label{lem:AndInd}
Deleting from $\An_k$ the path $v_{3k-4}v_{3k-3}v_{3k-2}$ leaves the graph $\An_{k-1}$.
\end{lemma}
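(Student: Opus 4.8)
The plan is to verify the statement directly from the defining adjacency rule of the Andr\'{a}sfai graphs, exploiting the fact that this rule does not involve the parameter $k$ at all. First I would record, as a sanity check, that $v_{3k-4}v_{3k-3}v_{3k-2}$ really is an (induced) path in $\An_k$: the two consecutive index differences both equal $1\equiv 1\bmod 3$, so both edges are present, whereas $|(3k-4)-(3k-2)|=2\not\equiv 1\bmod 3$, so there is no chord. What actually matters for the lemma, however, is only \emph{which three vertices are removed}, namely $v_{3k-4}$, $v_{3k-3}$ and $v_{3k-2}$.

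Deleting these three vertices together with all edges incident to them leaves the subgraph of $\An_k$ induced on $\{v_0,v_1,\dotsc,v_{3k-5}\}$. The one arithmetical point to notice is that $3k-5=3(k-1)-2$, so this vertex set is precisely $\{v_0,v_1,\dotsc,v_{3(k-1)-2}\}$, which is by definition the vertex set of $\An_{k-1}$. For the edges: given $i,j\in\{0,1,\dotsc,3k-5\}$, the pair $v_iv_j$ is an edge of the induced subgraph iff it is an edge of $\An_k$, i.e. iff $|i-j|\equiv 1\bmod 3$; and by the definition of $\An_{k-1}$ the pair $v_iv_j$ is an edge of $\An_{k-1}$ iff $|i-j|\equiv 1\bmod 3$. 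Since the condition $|i-j|\equiv 1\bmod 3$ is literally the same in both cases, the two graphs have the same vertex set \emph{and} the same edge set, so they are equal, not merely isomorphic (and the vertex labelling is preserved).

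Because the whole argument amounts to unwinding the two definitions, I do not expect a genuine obstacle. The only place that calls for (minimal) care is the index bookkeeping $3k-5=3(k-1)-2$, which is exactly what guarantees that the truncated vertex set has the length required to be $V(\An_{k-1})$; combined with the observation that ``$v_i\sim v_j$ iff $|i-j|\equiv 1\bmod 3$'' is a $k$-independent rule, this makes the restriction to the first $3k-4$ vertices come out precisely as the next smaller Andr\'{a}sfai graph.
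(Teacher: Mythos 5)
Your proof is correct and takes essentially the same route as the paper, which simply cites Godsil--Royle and remarks that the claim is ``easy to see from the definition''; you have just written out that easy verification (the adjacency rule is $k$-independent and $3k-5=3(k-1)-2$). No issues.
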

\begin{proof} This is stated above Lemma 6.11.2 in ~\cite{GodRoy} and easy to see from the definition of $\An_k$.
\end{proof}

\section{Main result}
The following theorem exhibits a minimal bipartification for the graphs $\An_k$.
\begin{theorem}\label{thm:bipForAnd}
For every integer $k\geq 2$ the set of edges $F_k := U_k^{(1)} \cup U_k^{(2)}$, where
\begin{align}
U_k^{(1)} & := \bigcup_{i=0}^{\floor{\frac{k}{2}}-1}\bigcup_{j=i}^{\floor{\frac{k}{2}}-1} \bigl \{ \{ v_{(3k-4)-3i}, v_{(3k-5)-3j} \}\bigr \},\label{Uk1}\\
U_k^{(2)} & := \bigcup_{i=0}^{\floor{\frac{k-1}{2}}-1}\bigcup_{j=0}^{\floor{\frac{k-1}{2}}-1-i} \bigl \{ \{ v_{3i}, v_{(3i+1)+3j} \}\bigr \}\label{Uk2},
\end{align}
is a minimal bipartification of $\An_k$ in the stronger sense that omitting an element from it creates a $5$-cycle. 
Moreover, the bipartite graph $\An_k-F_k$ admits the bipartition $A_k\cup B_k$ where
\begin{gather}
A_k := \{ v_{3i}:\; i=0,\dotsc, k-1\}\cup \{v_{3i+1}:\; i=0,\dotsc, \floor{(k-1)/2}-1 \},\\
B_k := \{ v_{3i+2}:\; i=0,\dotsc, k-2\}\cup \{v_{3i+1}:\; \floor{(k-1)/2},\dotsc, k-1 \}.
\end{gather}
Moreover, the set $F_k$ consists of exactly $\floor{\frac{k^2}{4}}=\floor{\frac{1}{36}\bigl( |\An_k|+1\bigr)^2}$ edges.
\end{theorem}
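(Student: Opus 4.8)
The plan is to establish the three assertions of the theorem — the explicit bipartition, the strong minimality, and the edge count — essentially separately, relying throughout on the congruence description of adjacency, namely $v_a\sim v_b$ if and only if $|a-b|\equiv 1\pmod 3$, and on $|\An_k|=3k-1$. The bookkeeping fact I would record first, immediate from the adjacency rule, is that the neighbours of $v_{3i}$ in $\An_k$ are exactly the $v_s$ with $s\equiv 2\pmod 3$ and $s<3i$ together with the $v_s$ with $s\equiv 1\pmod 3$ and $s>3i$, and similarly, with the residue roles cyclically shifted, for $v_{3i+1}$ and $v_{3i+2}$; in particular no edge of $\An_k$ joins two indices of equal residue mod~$3$.

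First I would treat the bipartition. That $A_k\cup B_k=V(\An_k)$ and $A_k\cap B_k=\emptyset$ is seen by sorting indices by residue mod~$3$: the $k$ indices $\equiv 0$ all go to $A_k$, the $k-1$ indices $\equiv 2$ all go to $B_k$, and the $k$ indices $\equiv 1$ are split at the threshold $\floor{(k-1)/2}$. To get that $\An_k-F_k$ is bipartite with parts $A_k,B_k$ it suffices to show that every edge of $\An_k$ contained in $A_k$, or in $B_k$, already lies in $F_k$. By the bookkeeping fact, a monochromatic edge inside $A_k$ must join some $v_{3i}$ to some $v_{3i'+1}$ with $i'<\floor{(k-1)/2}$, and such a pair is an edge precisely when $i\le i'$; a monochromatic edge inside $B_k$ must join some $v_{3i'+1}$ with $i'\ge\floor{(k-1)/2}$ to some $v_{3i+2}$ with $i\ge i'$. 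The substitution $i'\mapsto i+j$ identifies the first family with $U_k^{(2)}$ from \eqref{Uk2}, and $m=k-2-i$, $n=k-2-j$ identifies the second with $U_k^{(1)}$ from \eqref{Uk1}; thus $F_k$ is precisely the set of monochromatic edges, $\An_k-F_k$ is bipartite as claimed, and as a by-product $U_k^{(1)}$ and $U_k^{(2)}$ are disjoint and each is injectively parametrised. Those two substitutions also display $U_k^{(1)}$ and $U_k^{(2)}$ as triangular arrays, so that $|U_k^{(1)}|=\binom{\floor{k/2}+1}{2}$ and $|U_k^{(2)}|=\binom{\floor{(k-1)/2}+1}{2}$; since $\floor{k/2}+\floor{(k-1)/2}=k-1$, a short computation in the two parities of $k$ yields $|F_k|=\floor{k^2/4}$, and $\floor{k^2/4}=\floor{\frac{1}{36}(|\An_k|+1)^2}$ is immediate from $|\An_k|+1=3k$.

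The substantive part, which I expect to be the main obstacle, is the strong minimality. Since $F_k$ is exactly the monochromatic edge set, it suffices, for each $e=\{x,y\}\in F_k$, to exhibit a path of length~$4$ from $x$ to $y$ all of whose edges cross $(A_k,B_k)$: such a path avoids $F_k$, and closing it by $e$ produces a $5$-cycle of $\An_k$ in which $e$ is the only monochromatic edge, hence a $5$-cycle of $\An_k-(F_k\setminus\{e\})$ through $e$. After disposing of $k=2$ by hand ($\An_2=C_5$ and $F_2=\{\{v_1,v_2\}\}$, and deleting that one edge leaves $C_5$), I would offer one uniform family of such $5$-cycles for each of $U_k^{(2)}$ and $U_k^{(1)}$: for $e=\{v_{3i},v_{3i'+1}\}\in U_k^{(2)}$ the $5$-cycle
\[
v_{3i}\,,\ v_{3\floor{(k-1)/2}+1}\,,\ v_{3(i'+1)}\,,\ v_{3i'+2}\,,\ v_{3i'+1}
\]
(traversed cyclically, so $\{v_{3i'+1},v_{3i}\}=e$), and for $e=\{v_{3m+2},v_{3n+1}\}\in U_k^{(1)}$, with $\floor{(k-1)/2}\le n\le m\le k-2$, the $5$-cycle
\[
v_{3m+2}\,,\ v_{1}\,,\ v_{2}\,,\ v_{3n}\,,\ v_{3n+1}
\]
(again cyclically, so $\{v_{3n+1},v_{3m+2}\}=e$). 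The remaining work is to verify in each case, purely from the congruence rule and the residue assignment of $A_k,B_k$, that the five listed vertices are distinct, that all indices lie in $\{0,\dots,3k-2\}$, that cyclically consecutive vertices are adjacent, and that each of the four edges other than $e$ crosses the bipartition. One could instead organise the minimality inductively via Lemma~\ref{lem:AndInd}: for even $k$ one has $F_{k-1}\subseteq F_k$ with $F_k\setminus F_{k-1}$ consisting only of edges at the new vertex $v_{3k-4}$, so the inductive hypothesis carries the old edges and a short path each new one; but the odd case is more delicate, so the two closed-form families above seem cleaner.
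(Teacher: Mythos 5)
Your proof is correct, but it takes a genuinely different route from the paper. The paper proceeds by induction on $k$ via Lemma~\ref{lem:AndInd}: it tracks which vertices are new or switch sides between $(A_{k-1},B_{k-1})$ and $(A_k,B_k)$, computes $F_k\setminus F_{k-1}$ explicitly (with a parity case split, since $U_k^{(2)}\setminus U_{k-1}^{(2)}$ is nonempty only for odd $k$), and then exhibits two families of $5$-cycles $C_k^{(j)}$, $D_k^{(i)}$ for the \emph{new} edges only, checking their intersections with $F_k$ and $F_{k-1}$ by a signature-mod-$3$ and index-magnitude analysis. Your key move --- reparametrising $U_k^{(1)}$ and $U_k^{(2)}$ to show that $F_k$ is \emph{exactly} the set of edges of $\An_k$ lying inside $A_k$ or inside $B_k$ --- is not made explicit in the paper, and it simultaneously yields bipartiteness of $\An_k-F_k$, the count $|F_k|=\binom{\floor{k/2}+1}{2}+\binom{\floor{(k-1)/2}+1}{2}=\floor{k^2/4}$, and a uniform, induction-free minimality argument: once $F_k$ is known to be the monochromatic edge set, a $5$-cycle through $e$ whose other four edges cross $(A_k,B_k)$ automatically meets $F_k$ only in $e$, with no need to test membership in $U_k^{(1)},U_k^{(2)}$ edge by edge. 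I checked your two cycle families: for $e=\{v_{3i},v_{3i'+1}\}\in U_k^{(2)}$ the cycle through $v_{3\floor{(k-1)/2}+1}$, $v_{3(i'+1)}$, $v_{3i'+2}$ has all adjacencies, ranges, distinctness and crossings as claimed, and for $e=\{v_{3m+2},v_{3n+1}\}\in U_k^{(1)}$ the cycle through $v_1,v_2,v_{3n}$ works because $n\geq\floor{(k-1)/2}\geq 1$ for $k\geq 3$ (which is exactly why $k=2$ must be, and is, treated separately; there your phrase ``deleting that one edge leaves $C_5$'' should read that deleting $F_2\setminus\{e\}=\varnothing$ leaves $C_5$). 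The trade-off: your argument is shorter, avoids the odd/even bookkeeping and the separate verification that the cycles avoid $F_{k-1}$, while the paper's induction stays closer to the recursive structure $\An_k-v_{3k-4}v_{3k-3}v_{3k-2}=\An_{k-1}$ that motivates the construction.
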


\begin{proof} This will be proved by induction on $k$. For $k=2$, 
the graph $\An_k$ is the $5$-cycle $v_0v_1v_2v_3v_4v_5v_0$ and the lemma correctly 
states that the single edge $\{v_1,v_2\}$ is a minimal bipartification in the stronger 
sense stated above and that $v_0v_1v_2v_3v_4v_5v_0-\{v_1,v_2\}$ admits the bipartition 
$A_2\cup B_2$. The statement about the number of edges is correct, too. 

Now suppose that $k\geq 3$ and that the statement is true for $k-1$. By ~\ref{lem:AndInd}, 
it is known that $\An_k - v_{3k-4}v_{3k-3}v_{3k-2} = \An_{k-1}$. By induction, $F_{k-1}$ is 
a minimal bipartification for $\An_{k-1}$ in the stronger, $5$-cycle-sense, and $A_{k-1}\cup B_{k-1}$ 
is a bipartition of $\An_{k-1}-F_{k-1}$. 

To prove the statement about being a bipartification for $k$, it suffices to show that in $\An_k$ every 
edge having at least one endvertex $v$ which is either \emph{new} 
(i.e. $v\in (A_k\cup B_k)\backslash (A_{k-1}\cup B_{k-1})$), 
or has \emph{changed partition classes} 
(i.e. $v\in A_{k}\cap B_{k-1}$ or $v\in B_{k} \cap A_{k-1}$), lies in $F_k$.

From the definition of $A_k$ and $B_k$ it is clear that $(A_k\cup B_k)\backslash (A_{k-1}\cup B_{k-1})=\{v_{3k-4}, v_{3k-3}, v_{3k-2}\}$, 
and that $v_{3k-4}\in B_k$, $v_{3k-3}\in A_k$ and $v_{3k-2}\in B_k$. 

As to $v_{3k-4}\in B_k$, from the definition of $\An_k$ it is clear that this vertex is adjacent to exactly 
the $k$ vertices in $\{v_{3k-3}\}\cup \{v_{3k-5-3j}:\; j=0,\dotsc, k-2\}$. Of these, exactly those in 
$\{v_{3k-5-3j}:\; j=0,\dotsc, \floor{\frac{k}{2}}-1\}$ lie in $B_k$. Therefore, it suffices to check that the edges in 
$\bigl \{\{v_{3k-4},v_{3k-5-3j}\}:\; j=0,\dotsc, \floor{\frac{k}{2}}-1\bigr \}$ lie in $F_k$. This becomes 
obvious by setting $i=0$ in \eqref{Uk1}.

As to $v_{3k-3}\in A_k$, from the definition of $\An_k$ it is clear that this vertex is adjacent to exactly 
the $k$ vertices in $\{v_{3k-2}\}\cup \{v_{2+3i}:\; i=0,\dotsc, k-2\}$, all of which lie in $B_k$.

As to $v_{3k-2}\in B_k$, from the definition of $\An_k$ it is clear that this vertex is adjacent to exactly the $k$ 
vertices $\{v_{3i}:\; i=0,\dotsc, k-1\}$, all of which lie in $A_k$.

From the definition of $A_k$ and $B_k$ it is clear by divisibility that 
$v\in B_{k} \cap A_{k-1}= \{v_{3i+1}:\; \floor{(k-1)/2},\dotsc, k-1 \} \cap \{v_{3i+1}:\; i=0,\dotsc, \floor{(k-2)/2}-1 \}$ 
and this intersection is clearly empty for every integer $k\geq 2$. Thus, a vertex in class $A$ never changes over to 
the class $B$.

Analogously, $v\in A_{k} \cap B_{k-1}= \{v_{3i+1}:\; i=0,\dotsc, \floor{(k-1)/2}-1 \}\cap \{v_{3i+1}:\; \floor{(k-2)/2},\dotsc, k-1 \}$, 
but now this is non-empty iff $k$ is odd with the intersection being equal to $\{v_{3\floor{\frac{k-2}{2}}+1}\}$ because the oddness of $k$ 
implies $\floor{(k-1)/2}-1 = \floor{(k-2)/2}$. From the definition of $\An_k$ it is easy to see that $v_{3\floor{\frac{k-2}{2}}+1}\in A_k$ 
is adjacent exactly to the $k$ vertices 
\begin{equation}\label{neighbourhood}
\{v_{3i}:\; i=0,\dotsc, \floor{(k-2)/2}\}\cup\{ v_{3i+2}:\; i=\floor{(k-2)/2},\dotsc, k-2\}.
\end{equation}
Of these, exactly those in the first set lie in $A_k$, so it remains to check that the set of edges 
\begin{equation}\label{edgeset}
\bigl \{ \{ v_{3\floor{\frac{k-2}{2}}+1},v_{3i}\}:\; i=0,\dotsc, \floor{(k-2)/2} \bigr \}
\end{equation}
is a subset of $F_k$. To see this, fix $j=\floor{\frac{k-1}{2}}-1-i$ in \eqref{Uk2}. Using  
$\floor{(k-1)/2}-1 = \floor{(k-2)/2}$, which implies 
$(3i+1)+3\bigl(\floor{\frac{k-1}{2}}-1-i\bigr)=3 \floor{\frac{k-2}{2}}+1$, 
it is clear that the subset of $U_k^{(2)}$ thus obtained is equal to \eqref{edgeset}. This completes 
the induction as far as being a bipartification is concerned.

For proving the strong minimality of $F_k$ using the strong minimality of $F_{k-1}$ 
(which is know by induction), it suffices to show that for every edge in 
$F_k\backslash F_{k-1}$, there is a $5$-cycle in $\An_k$ which intersects 
$F_k$ in this edge only and is disjoint from $F_{k-1}$, which implies 
that the edge in question is indispensable.

To prepare for the determination of the set $F_k\backslash F_{k-1}$, note that for every pair of integers $k_1\geq 2$ and 
$k_2\geq 2$, since none of the edges in $U_{k_1}^{(1)}$ contains a vertex with an index divisible by $3$ whereas 
every edge in $U_{k_2}^{(2)}$ does, the intersections $U_{k_1}^{(1)}\cap U_{k_2}^{(2)}$ and $U_{k_1}^{(2)}\cap U_{k_2}^{(1)}$ are 
both empty. In particular, for every integer $k\geq 2$, 
\begin{equation}
U_{k}^{(1)}\cap U_{k-1}^{(2)} = \varnothing\quad\textnormal{and}\quad U_{k}^{(2)}\cap U_{k-1}^{(1)} = \varnothing.
\end{equation}

Obviously, for sets $S_1,S_2,S_3,S_4$, the condition that $S_1\cap S_4=\varnothing$ and $S_2\cap S_3=\varnothing$ implies 
that $(S_1\cup S_2)\backslash (S_3\cup S_4) = (S_1\backslash S_3)\cup (S_2\backslash S_4)$, hence 
\begin{equation}\label{setdiff}
F_k\backslash F_{k-1} = (U_{k}^{(1)}\backslash U_{k-1}^{(1)})\cup (U_{k}^{(2)}\backslash U_{k-1}^{(2)})
\end{equation}

As to $U_{k}^{(1)}\backslash U_{k-1}^{(1)}$, let $e_k^{i,j}:= \{ v_{(3k-4)-3i}, v_{(3k-5)-3j}\}$ and 
$f^{i,j} := \{ v_{3i}, v_{(3i+1)+3j} \}$, note that $e_k^{i,j}=e_{k-1}^{i-1,j-1}$, and that, 
due to the different parities of the indices, for all integers $i_1,i_2,j_1,j_2\geq 2$, the two 
edges $e_k^{i_1,j_1}$ and $e_k^{i_2,j_2}$, and the two edges $f^{i_1,j_1}$ and $f^{i_2,j_2}$, 
are equal iff $i_1=i_2$ and $j_1=j_2$.

If $k$ is odd, then $\floor{\frac{k-2}{2}}=\floor{\frac{k-1}{2}}-1$, hence, by the 
criterion for equality of two edges  $f^{i_1,j_1}$ and $f^{i_2,j_2}$,
\begin{align}
U_{k}^{(2)}\backslash U_{k-1}^{(2)} & = \bigcup_{i=0}^{\floor{\frac{k-1}{2}}-1}\bigcup_{j=0}^{\floor{\frac{k-1}{2}}-1-i} \{ f^{i,j} \}
\backslash  \bigcup_{i=0}^{\floor{\frac{k-1}{2}}-2}\bigcup_{j=0}^{\floor{\frac{k-1}{2}}-2-i} \{ f^{i,j} \} \\ 
& = \bigcup_{i=0}^{\floor{\frac{k-1}{2}}-1} \{ f^{i,\floor{\frac{k-1}{2}}-1-i}\} = \bigcup_{i=0}^{\floor{\frac{k-1}{2}}-1} 
\bigl \{ \{ v_{3i}, v_{3\floor{\frac{k-1}{2}}-2} \}  \bigr\}
\end{align}
and $\floor{\frac{k-1}{2}}=\floor{\frac{k}{2}}$, hence
\begin{align}
U_{k}^{(1)}\backslash U_{k-1}^{(1)} & = \bigcup_{i=0}^{\floor{\frac{k}{2}}-1} \bigcup_{j=i}^{\floor{\frac{k}{2}}-1} \{ e_k^{i,j} \} 
\quad\backslash \bigcup_{i=0}^{\floor{\frac{k-1}{2}}-1} \bigcup_{j=i}^{\floor{\frac{k-1}{2}}-1} \{ e_{k-1}^{i,j} \} \\
& = \bigcup_{i=0}^{\floor{\frac{k}{2}}-1} \bigcup_{j=i}^{\floor{\frac{k}{2}}-1} \{ e_k^{i,j} \}
\quad\backslash \bigcup_{i=0}^{\floor{\frac{k}{2}}-1} \bigcup_{j=i}^{\floor{\frac{k}{2}}-1} \{ e_{k-1}^{i,j} \} \\
& = \bigcup_{i=0}^{\floor{\frac{k}{2}}-1} \bigcup_{j=i}^{\floor{\frac{k}{2}}-1} \{ e_{k-1}^{i-1,j-1} \}
\quad\backslash \bigcup_{i=0}^{\floor{\frac{k}{2}}-1} \bigcup_{j=i}^{\floor{\frac{k}{2}}-1} \{ e_{k-1}^{i,j} \} \\
& = \bigcup_{j=0}^{\floor{\frac{k}{2}}-1} \{ e_{k-1}^{-1,j-1} \} = \bigcup_{j=0}^{\floor{\frac{k}{2}}-1} \bigl \{ \{v_{3k-4}, v_{(3k-5)-3j}\} \bigr \}
\end{align}
where the penultimate equality is true by the criterion for equality of two edges $e_k^{i_1,j_1}$ and $e_k^{i_2,j_2}$. 
Using \eqref{setdiff} it follows that, if $k$ is odd, 
\begin{equation}\label{diffwhenkodd}
F_k\backslash F_{k-1} = \bigcup_{j=0}^{\floor{\frac{k}{2}}-1} \bigl\{ \{v_{3k-4}, v_{(3k-5)-3j}\} \bigr \}
\cup \bigcup_{i=0}^{\floor{\frac{k-1}{2}}-1} \bigl\{ \{ v_{3i}, v_{3\floor{\frac{k-1}{2}}-2} \} \bigr\}.
\end{equation}

If $k$ is even, then $\floor{\frac{k-2}{2}}=\floor{\frac{k-1}{2}}$, hence $U_{k}^{(2)}\backslash U_{k-1}^{(2)}=\varnothing$, and 
$\floor{\frac{k-1}{2}}=\floor{\frac{k}{2}}-1$, hence
\begin{align}
U_{k}^{(1)}\backslash U_{k-1}^{(1)} & = \bigcup_{i=0}^{\floor{\frac{k}{2}}-1} \bigcup_{j=i}^{\floor{\frac{k}{2}}-1} \{ e_{k-1}^{i-1,j-1} \}
\quad\backslash \bigcup_{i=0}^{\floor{\frac{k}{2}}-2} \bigcup_{j=i}^{\floor{\frac{k}{2}}-2} \{ e_{k-1}^{i,j} \} \\
& = \bigcup_{j=0}^{\floor{\frac{k}{2}}-1} \{ e_{k-1}^{-1,j-1} \} = \bigcup_{j=0}^{\floor{\frac{k}{2}}-1} \bigl \{ \{v_{3k-4}, v_{(3k-5)-3j}\} \bigr \},
\end{align}
showing that $U_{k}^{(1)}\backslash U_{k-1}^{(1)}$ is given by the same formula regardless of the parity of $k$. Using \eqref{setdiff}, 
it follows that, if $k$ is even, 
\begin{equation}\label{diffwhenkeven}
F_k\backslash F_{k-1} = \bigcup_{j=0}^{\floor{\frac{k}{2}}-1} \bigl\{ \{v_{3k-4}, v_{(3k-5)-3j}\} \bigr \}.
\end{equation}

To prove the indispensability of each of the edges in 
$\bigcup_{j=0}^{\floor{\frac{k}{2}}-1} \bigl\{ \{v_{3k-4}, v_{(3k-5)-3j}\} \bigr \}$, 
for every $k$ and every $j\in \{0,1,\dotsc, \lfloor k/ 2\rfloor - 1\}$ define
\begin{equation}
C_k^{(j)} := v_{3k-4} v_{(3k-5)-3j} v_{3\floor{\frac{k-1}{2}}} v_{3k-2} v_{3k-3} v_{3k-4},
\end{equation}
and note that this is a $5$-cycle since four of the needed five adjacencies are 
obvious and since the identity $k = \lfloor (k-1)/ 2\rfloor + \lfloor k/ 2\rfloor + 1$ 
implies that $(3k-5)-3j \geq 3\lfloor (k-1)/2 \rfloor$ for every integer $k$ and every 
$j\in \{0,1,\dotsc, \lfloor k/ 2\rfloor - 1\}$, so 
$\bigl |3\lfloor (k-1)/2 \rfloor - ((3k-5)-3j)\bigr | = (3k-5)-3j - 3\lfloor (k-1)/2 \rfloor
= 3(k-\lfloor (k-1)/2 \rfloor - j - 2)+1$. Since the expression in the parentheses is nonnegative 
for every $j\in \{0,1,\dotsc, \lfloor k/ 2\rfloor - 1\}$, this shows that the absolute value of the 
difference of the indices of the two vertices $v_{(3k-5)-3j}$ and $v_{3\lfloor (k-1)/2 \rfloor}$ is 
congruent to $1$ modulo 3, hence the vertices are adjacent.

To see that for every edge in $\bigcup_{j=0}^{\floor{\frac{k}{2}}-1} \bigl\{ \{v_{3k-4}, v_{(3k-5)-3j}\} \bigr \}$, 
the cycle $C_k^{(j)}$ intersects $F_k = U_k^{(1)}\cup U_k^{(2)}$ precisely in the edge $\{v_{3k-4}, v_{(3k-5)-3j}\}$, 
take the indices in the edge sets $U_k^{(1)}$, $U_k^{(2)}$ and $E(C_k^{(j)})$ modulo $3$. 
For $E(C_k^{(j)})$ this results in the `signature' $(\{2,1\},\{1,0\},\{0,1\},\{1,0\},\{0,2\})$, whereas every 
element of $U_k^{(1)}$ has signature $(\{2,1\})$ and every element of $U_k^{(2)}$ has $(\{0,1\})$. This shows that 
$C_k^{(j)}$ can intersect $U_k^{(1)}$ in at most the edge $\{v_{3k-4}, v_{(3k-5)-3j}\}$, which it does 
(when $i=0$ in  $U_k^{(1)}$), and $U_k^{(2)}$ in at most the three edges $\{v_{(3k-5)-3j}, v_{3\floor{\frac{k-1}{2}}}\}$, 
$\{v_{3\floor{\frac{k-1}{2}}}, v_{3k-2}\}$ and $\{v_{3k-2}, v_{3k-3}\}$, which it does not, since in $U_k^{(2)}$ the index 
which is divisible by three rises only as high as $3\lfloor (k-1)/2 \rfloor - 3$, which prevents an equality with 
any of the three edges. 

Moreover, to see that for every $j\in \{0,1,\dotsc, \lfloor k/2 \rfloor - 1\}$, 
the cycle $C_k^{(j)}$ is disjoint from $F_{k-1}$, repeat this argument but note that now \emph{all} of the 
cadidate-edges arising from considering the indices modulo $3$ fail to be actually contained 
in the intersection $E(C_k^{(j)})\cap F_{k-1}$ for reasons of magnitude of indices.

Since by \eqref{diffwhenkeven}, 
$F_k\backslash F_{k-1} = \bigcup_{j=0}^{\floor{\frac{k}{2}}-1} \bigl\{ \{v_{3k-4}, v_{(3k-5)-3j}\} \bigr \}$, 
this completes the induction and proves the minimality of $F_k$ in the case of even $k$. In the case of 
odd $k$, by \eqref{diffwhenkodd} one still has to prove the indispensability each of the edges 
$\bigcup_{i=0}^{\floor{\frac{k-1}{2}}-1} \{ v_{3i}, v_{3\floor{\frac{k-1}{2}}-2} \}$. To this end, 
for every $i\in \{0,\dotsc, \lfloor (k-1)/2 \rfloor\}$ define 
\begin{equation} 
D_k^{(i)} :=  v_{3i} v_{3\floor{\frac{k-1}{2}}-2} v_{3k-4} v_{3k-3} v_{3k-2} v_{3i} 
\end{equation} 
and note that this is a $5$-cycle since all of the five needed adjacencies are obvious (in the sense 
that it is obvious how to compute each of the absolute values of the difference of indices). 

Too see that for every edge in $\bigcup_{i=0}^{\floor{\frac{k-1}{2}}-1}\bigl\{ \{ v_{3i}, v_{3\floor{\frac{k-1}{2}}-2} \}\bigr\}$, 
the cycle $D_k^{(i)}$ intersects $F_k = U_k^{(1)}\cup U_k^{(2)}$ precisely in the edge 
$\{ v_{3i}, v_{3\floor{\frac{k-1}{2}}-2} \}$, repeat the argument given three paragraphs earlier. 

This time, the cycle has signature $(\{0,1\},\{1,2\},\{2,0\},\{0,1\}, \{1,0\})$. 
This shows that $D_k^{(i)}$ can intersect $U_k^{(1)}$ in at most the edge 
$\{v_{3\floor{\frac{k-1}{2}}-2}, v_{3k-4}\}$. Again by looking at the remainders modulo $3$ it 
is clear that for this to happen it is necessary that $i=0$ in $U_k^{(1)}$, whereupon the $j$ would 
have to satisfy $3\lfloor (k-1)/2 \rfloor - 2 = 3k - 5 -3j$ which is equivalent to $j=\lfloor k/2 \rfloor$ 
which contradicts $j\in \{0,\dotsc, \lfloor k/2 \rfloor -1 \}$, hence $D_k^{(i)}$ does not 
intersect $U_k^{(1)}$. Furthermore, the signatures of the cycle show that $D_k^{(i)}$ can 
intersect $U_k^{(2)}$ in at most the three edges 
$\{v_{3i}, v_{3\floor{\frac{k-1}{2}}-2}\}$, $\{v_{3k-3}, v_{3k-2}\}$ and $\{v_{3k-2}, v_{3i}\}$. Setting 
$j=\lfloor (k-1)/2 \rfloor - 1 - i$ in $U_k^{(2)}$ shows that the first of these edges actually lies 
in the intersection, and considering the magnitude of the index which is divisible by three in the second 
edge shows that the second edge does not. As to the third candidate-edge, since $j\leq \lfloor (k-1)/2 \rfloor - 1 - i$, 
hence $i+j \leq \lfloor (k-1)/2 \rfloor - 1$, implies that $(3i+1)+3j = 1 + 3(i+j)\leq  3\lfloor (k-1)/2 \rfloor - 2$, 
the index of the vertex which is not divisible by three cannot reach $3k-2$, so the edge is not in the intersection.

Moreover, to see that for every $i\in \{0,\dotsc, \lfloor (k-1)/2 \rfloor\}$ the cycle $D_k^{(i)}$ is disjoint 
from $F_{k-1}$, repeat the argument from above and note that all arguments for an edge not 
lying in the intersection $E(D_k^{(i)})\cap F_k$ can be adapted to the intersection $E(D_k^{(i)})\cap F_{k-1}$ 
and that the edge $\{v_{3i}, v_{3\floor{\frac{k-1}{2}}-2}\}$, which was the only one to make it into 
the intersection before, does not lie in $U_{k-1}^{(2)}$ since an analogous estimate as the one above now 
shows that $(3i+1)+3j\leq 3\lfloor (k-2)/2 \rfloor - 2$ and 
$3\lfloor (k-2)/2 \rfloor - 2 < 3\lfloor (k-1)/2 \rfloor - 2$ since $k$ is odd.

The statement about the cardinality of $F_k$ needs no induction. It is obvious that 
$| F_k\bigr | = \frac{1}{2}\floor{\frac{k}{2}} ( \floor{\frac{k}{2}} + 1 ) + \frac{1}{2}\floor{\frac{k-1}{2}}
( \floor{\frac{k-1}{2}} + 1 )$, and by distinguishing between odd and even $k$ it is easy to see that 
this is equal to $\floor{\frac{k^2}{4}}=\floor{\frac{1}{36}\bigl( |\An_k|+1\bigr)^2}$. 
\end{proof}
Since it is equally easy to show that $\floor{\frac{1}{36}\bigl( |\An_k|+1\bigr)^2}\leq \frac{1}{25}|\An_k|^2$, 
with the inequality being strict for every $k\geq 5$, Theorem \ref{thm:bipForAnd} is consistent with the 
following well-known conjecture of Paul Erd\H{o}s.
\begin{conjecture}[Erd\H{o}s bipartification conjecture; for more information see the introductions of 
\cite{BraLocal1998} and \cite{SudK4} and the references therein] 
Every triangle-free graph $G$ can be made bipartite by deleting at most $\frac{1}{25}|G|^2$ edges.
\end{conjecture}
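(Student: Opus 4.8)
The plan is to reduce this purely numerical claim to an elementary quadratic inequality in $k$, using the vertex count already implicit in Theorem~\ref{thm:bipForAnd}. First I would record that $\An_k$ has exactly $3k-1$ vertices, so that $|\An_k|+1 = 3k$ and hence $\frac{1}{36}\bigl(|\An_k|+1\bigr)^2 = \frac{k^2}{4}$ \emph{on the nose}, with no rounding inside the argument. Consequently $\floor{\frac{1}{36}(|\An_k|+1)^2} = \floor{\frac{k^2}{4}} \leq \frac{k^2}{4}$, and it suffices to prove the stronger statement that $\frac{k^2}{4} \leq \frac{1}{25}|\An_k|^2 = \frac{1}{25}(3k-1)^2$ for all $k \geq 2$, with strict inequality for $k\geq 3$ (which of course covers the claimed range $k\geq 5$).

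Next I would clear denominators. Multiplying the inequality $\frac{k^2}{4} \leq \frac{1}{25}(3k-1)^2$ by the positive number $100$ shows that it is equivalent to $25k^2 \leq 4(3k-1)^2 = 36k^2 - 24k + 4$, i.e.\ to $0 \leq 11k^2 - 24k + 4$. The key observation is that this quadratic factors cleanly as $11k^2 - 24k + 4 = (k-2)(11k-2)$, which one checks by expanding or by noting that its roots are $k=2$ and $k=\frac{2}{11}$.

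From the factorization the conclusion is immediate. For every integer $k\geq 2$ both factors are nonnegative, so $11k^2-24k+4 \geq 0$ and the non-strict inequality $\floor{\frac{1}{36}(|\An_k|+1)^2} \leq \frac{1}{25}|\An_k|^2$ holds; at $k=2$ the first factor vanishes, matching the equality $\floor{k^2/4}=1=\frac{1}{25}|\An_2|^2$ already noted in the base case of the proof. For $k\geq 3$ one has $k-2 \geq 1$ and $11k-2 > 0$, so $11k^2-24k+4 > 0$, giving $\frac{k^2}{4} < \frac{1}{25}(3k-1)^2$ and therefore $\floor{\frac{1}{36}(|\An_k|+1)^2} \leq \frac{k^2}{4} < \frac{1}{25}|\An_k|^2$ strictly. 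This establishes the strict inequality for all $k\geq 3$, which is slightly more than the asserted range $k\geq 5$.

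There is no genuine obstacle here, and I do not expect the step to resist: the only two points requiring a moment's attention are the exact identity $|\An_k|=3k-1$ (so that the factor $\frac{1}{36}$ and the summand $+1$ conspire to produce precisely $\frac{k^2}{4}$, with no loss hidden inside the floor argument) and the harmless replacement of $\floor{\frac{k^2}{4}}$ by $\frac{k^2}{4}$, which can only weaken the left-hand side and hence preserves both the non-strict and the strict inequality.
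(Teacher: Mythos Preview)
Your write-up does not address the stated conjecture at all. The statement in question is the \emph{general} Erd\H{o}s bipartification conjecture, asserting that \emph{every} triangle-free graph $G$ can be made bipartite by deleting at most $\frac{1}{25}|G|^2$ edges. This is an open problem; the paper does not prove it and presents it explicitly as a conjecture. What you have written is instead a proof of the elementary numerical inequality
\[
\left\lfloor \tfrac{1}{36}\bigl(|\An_k|+1\bigr)^2 \right\rfloor \;\leq\; \tfrac{1}{25}\,|\An_k|^2,
\]
which the paper mentions in the sentence immediately preceding the conjecture as being ``equally easy to show'' and which merely establishes that Theorem~\ref{thm:bipForAnd} is \emph{consistent with} the conjecture for the specific family $\{\An_k\}$. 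Your argument for that inequality is entirely correct (and your observed strictness threshold $k\geq 3$ is in fact sharper than the paper's stated $k\geq 5$), but it says nothing about arbitrary triangle-free graphs and hence does not touch the actual statement you were asked to prove.

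To be explicit about the gap: proving the conjecture would require, for an arbitrary triangle-free $G$, exhibiting a set of at most $\frac{1}{25}|G|^2$ edges whose removal leaves a bipartite graph. Your computation only bounds $|F_k|$ against $\frac{1}{25}|\An_k|^2$, which combined with Theorem~\ref{thm:bipForAnd} verifies the conjecture for the single infinite family of Andr\'asfai graphs, not for all triangle-free graphs.
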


\section{Concluding remarks}
There are two interesting questions concerning Theorem \ref{thm:bipForAnd}. 
\subsection{Is the bipartifcation $F_k$ minimum?} It is easy to see that there exist 
minimal bipartifications of $\An_k$ in the stronger sense of Theorem \ref{thm:bipForAnd}, 
which nevertheless have almost twice as many edges. An example is the set of all 
edges running between the vertex set $\bigl\{v_{2+3i}:\; i\in \{0,\dotsc, k-2\}\bigr\}$ 
and the vertex set $\bigl\{v_{3i}:\; i\in \{0,\dotsc, k-1\}\bigr\}$. This set consists of 
exactly $\frac12 (k-1) k $ edges, and it is easy to check that 
$\frac{|F_k|}{\frac12 (k-1) k}$ is strictly less than $\frac12 + \frac{k+1}{(k-1)k}$. Given 
so much variation in the cardinalities of minimal bipartifications, it is natural to 
wonder whether the minimal bipartification $F_k$ from Theorem \ref{thm:bipForAnd} is also \emph{minimum}, 
i.e. has the smallest cardinality a bipartification of $\An_k$ can have. The author thinks it likely 
that this is the case.
\begin{conjecture} 
The set $F_k$ defined in Theorem \ref{thm:bipForAnd} is a minimum bipartification for $\An_k$.
\end{conjecture}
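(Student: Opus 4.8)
The plan is to complement Theorem~\ref{thm:bipForAnd}: that theorem already gives $\mathrm{bip}(\An_k)\le\floor{\frac{k^2}{4}}$, where $\mathrm{bip}(G)$ denotes the least number of edges whose deletion makes $G$ bipartite, so only the matching lower bound $\mathrm{bip}(\An_k)\ge\floor{\frac{k^2}{4}}$ remains. I would phrase this via the standard fact that $\mathrm{bip}(G)$ equals the minimum, over all $2$-colourings $\chi\colon V(G)\to\{0,1\}$, of the number of $\chi$-monochromatic edges; since $F_k$ is exactly the monochromatic-edge set of the colouring $(A_k,B_k)$, the conjecture is equivalent to the assertion that every $2$-colouring of $\An_k$ leaves at least $\floor{\frac{k^2}{4}}$ monochromatic edges. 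The first step is to make the structure of $\An_k$ explicit. Writing $V(\An_k)=R_0\cup R_1\cup R_2$, where $R_r$ is the set of vertices whose index is congruent to $r$ modulo $3$, the definition of $\An_k$ yields immediately that no $R_r$ spans an edge and that $v_{3a}v_{3b+1}\in E(\An_k)$ iff $a\le b$, that $v_{3a}v_{3c+2}\in E(\An_k)$ iff $a>c$, and that $v_{3b+1}v_{3c+2}\in E(\An_k)$ iff $b\le c$; hence $\An_k$ is tripartite and its three bipartite parts are ``half graphs'' linked cyclically (which also re-derives triangle-freeness).

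The second step uses that $R_1$ is an independent set, so the colour of a vertex $v_{3b+1}$ affects only the edges at that vertex. Each such vertex has exactly $k$ neighbours, namely the $b+1$ vertices $v_{3a}$ with $a\le b$ together with the $\max(0,k-1-b)$ vertices $v_{3c+2}$ with $c\ge b$; consequently, once the colours on $R_0\cup R_2$ are fixed, the best colour for $v_{3b+1}$ creates exactly $\min(s_b,k-s_b)$ monochromatic edges, where $s_b$ is the number of its $0$-coloured neighbours. Therefore $\mathrm{bip}(\An_k)$ equals the minimum over all colourings of $R_0\cup R_2$ of $\sum_{b=0}^{k-1}\min(s_b,k-s_b)+m_{02}$, where $m_{02}$ counts the monochromatic edges inside the $R_0$--$R_2$ half graph. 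The third step is to analyse this purely two-colour quantity. Here $(s_b)_{b=0}^{k-1}$ is a lattice path with steps in $\{-1,0,1\}$ — raising $b$ by one inserts the $R_0$-vertex $v_{3b}$ into the neighbourhood and deletes the $R_2$-vertex $v_{3(b-1)+2}$ — whose endpoints are governed by the numbers of $0$-coloured vertices of $R_2$ and of $R_0$, while $m_{02}$ depends only on how $R_0$ and $R_2$ are coloured. For the colouring behind $F_k$, namely $R_0$ entirely $0$ and $R_2$ entirely $1$, one gets $m_{02}=0$, $s_b=b+1$, and $\sum_{b}\min(s_b,k-s_b)=\floor{\frac{k^2}{4}}$, and the goal is to show this is optimal: any change that pushes the path further from the central value $\frac{k}{2}$ (which decreases the first sum) forces $R_0$ and $R_2$ to share colours in more positions and so increases $m_{02}$ by at least as much.

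The hard part will be making that trade-off rigorous. I would attempt an uncrossing argument to the effect that an optimal colouring of $R_0\cup R_2$ may be taken \emph{monotone} — each of $R_0$, $R_2$ an initial or a final segment of one colour — whereupon the problem reduces to a two-parameter optimisation over the two thresholds, which (distinguishing the parity of $k$, just as the proof of Theorem~\ref{thm:bipForAnd} does) evaluates to $\floor{\frac{k^2}{4}}$. The obstacle is that the three half graphs are linked cyclically with opposite orientations — two of type ``$a\le b$'' and one of type ``$a>c$'' — so a single-vertex recolouring that improves one of them may hurt another; a well-chosen potential function, or an exhaustive classification of the single-vertex recolourings that strictly lower the total monochromatic count, should be what makes the uncrossing go through. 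A different route, which avoids uncrossing, is to exhibit a fractional packing of odd cycles in $\An_k$ of total weight $\floor{\frac{k^2}{4}}$; by linear-programming duality this alone forces $\mathrm{bip}(\An_k)\ge\floor{\frac{k^2}{4}}$. For $k=3$ the eight $5$-cycles of the Wagner graph $\An_3$, each with weight $\tfrac14$, already do this, and one would hope to build an analogous explicit family of $5$-cycles for general $k$ — for instance from the cycles $C_k^{(j)}$ and $D_k^{(i)}$ of the present proof and their copies in the chain $\An_2\subseteq\An_3\subseteq\dots\subseteq\An_k$ — though proving that this linear program has no integrality gap for every $k$ is itself essentially as hard as the conjecture.
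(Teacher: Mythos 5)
This statement is posed in the paper as an open conjecture; the paper itself offers no proof of it, so there is nothing of the author's to compare your argument against. Judged on its own terms, your proposal contains a genuine gap: it is a reduction plus a research plan, not a proof. The correct and useful part is the setup. Your description of $\An_k$ as three independent classes $R_0,R_1,R_2$ joined by cyclically oriented half graphs is accurate, the reduction of the minimum bipartification number to $\min\bigl(\sum_{b=0}^{k-1}\min(s_b,k-s_b)+m_{02}\bigr)$ over colourings of $R_0\cup R_2$ is valid (because $R_1$ is independent, its vertices can indeed be optimised separately once $R_0\cup R_2$ is coloured), and your verification that the colouring underlying $F_k$ attains exactly $\floor{\frac{k^2}{4}}$ is consistent with Theorem \ref{thm:bipForAnd}. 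Your weak-duality check for $k=3$ (eight $5$-cycles of the Wagner graph, weight $\tfrac14$ each, every edge covered with total weight at most $1$) is also correct and certifies the case $k=3$.

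The missing step is precisely the content of the conjecture: a proof that \emph{every} colouring of $R_0\cup R_2$ satisfies $\sum_b\min(s_b,k-s_b)+m_{02}\ge\floor{\frac{k^2}{4}}$. You do not carry out the uncrossing argument; you only state that one ``would attempt'' it, and you yourself identify the obstruction (the three half graphs are linked with opposite orientations, so local recolourings can improve one term while degrading another) without resolving it. In particular, the claim that any deviation of the path $(s_b)$ from the central value forces a compensating increase in $m_{02}$ is exactly the inequality that needs to be proved, and no potential function or exchange lemma is supplied. The alternative LP route is in the same position: you would need, for every $k$, an explicit fractional packing of odd cycles of total weight $\floor{\frac{k^2}{4}}$ with every edge covered at most once, and you concede that constructing and verifying such a packing (equivalently, showing the covering LP has no gap against this bound) is essentially the conjecture itself. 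Until one of these two programmes is completed, the lower bound --- and hence the conjecture --- remains unproved.
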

However, $F_k$ is not unique in the sense that there are several other minimal bipartifications 
which are mutually edge-disjoint and have the same cardinality as $F_k$, and still more such bipartifications 
when one does not require mutual edge-disjointness. This leads to a second question concerning Theorem \ref{thm:bipForAnd}.
\subsection{Can Theorem \ref{thm:bipForAnd} be helpful for proving a somewhat less special special case of the Erd\H{o}s bipartification conjecture?}
The graphs $\An_k$ are important for characterizing triangle-free graphs with large minimum degree as homomorphic 
preimages. By Theorem 3.8 in ~\cite{ChenJinKoh1997}, later given a simpler proof in ~\cite{Bra1999}, if a 
triangle-free graph $G$ has minimum degree $\delta(G)>\frac13 |G|$ and chromatic number $\chi(G)\leq 3$, 
then it is homorphic to an Andr\'{a}sfai graph. This is why proving the following conjecture would be a 
little more than merely a drop in the ocean with regard to the Erd\H{o}s bipartification conjecture.
\begin{conjecture}\label{conj:hompre} By considering several copies of the  bipartification $F_k$, 
and then optimizing a system of quadratic inequalities, it is possible to prove that an 
arbitrary homomorphic preimage $H$ of $\An_k$ can be made bipartite by deleting at most 
$\frac{1}{25} |H|^2$ edges. 
\end{conjecture}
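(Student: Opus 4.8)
The plan is to reduce the assertion to a single quadratic minimax problem on the simplex and to solve that problem by exploiting the cyclic symmetry of $\An_k$ together with the smallness of $F_k$ furnished by Theorem~\ref{thm:bipForAnd}. First I would record the shape of a homomorphic preimage: if $\phi\colon H\to\An_k$ is a homomorphism and $V_i:=\phi^{-1}(v_i)$, then the sets $V_i$ are independent and every edge of $H$ joins some $V_i$ to some $V_j$ with $v_iv_j\in E(\An_k)$. Put $n_i:=|V_i|$, $n:=|H|$ and $x_i:=n_i/n\in\Delta:=\{x\ge 0:\sum_i x_i=1\}$. Every two-colouring of $V(\An_k)$ lifts to a two-colouring of $V(H)$ that is constant on the $V_i$, and under it the number of monochromatic edges of $H$ is at most $\sum n_in_j$ taken over the monochromatic edges of $\An_k$. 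For the bipartition $A_k\cup B_k$ this monochromatic set is exactly $F_k$, and for an automorphism $g\in\mathrm{Aut}(\An_k)$ the rotated bipartition produces $g(F_k)$; hence $H$ becomes bipartite after deleting at most $n^2 q_g(x)$ edges, where $q_g(x):=\sum_{v_iv_j\in g(F_k)}x_ix_j$. Consequently Conjecture~\ref{conj:hompre} follows once one proves
\begin{equation}\label{eq:minimax}
\max_{x\in\Delta}\ \min_{g\in\mathrm{Aut}(\An_k)}\ q_g(x)\ \le\ \tfrac{1}{25}.
\end{equation}

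The bound in \eqref{eq:minimax} cannot be improved. Spreading the mass uniformly over the five vertices of an induced $C_5$ in $\An_k$, say $v_0,\dotsc,v_4$, forces $q_g(x)\ge\tfrac1{25}$ for every $g$: since $\An_k-g(F_k)$ is bipartite, so is its subgraph on these five vertices, but that subgraph is an odd cycle, so $g(F_k)$ must contain at least one of its edges, each of weight $(1/5)^2$. This $x$ is the balanced blow-up of $C_5$ regarded as a homomorphic preimage of $\An_k$ through $C_5\hookrightarrow\An_k$, and it is the standard extremal configuration for which $n^2/25$ deletions are necessary; thus the value $\tfrac1{25}$ is attained for every $k$ and \eqref{eq:minimax} is in fact an equality.

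The ``several copies'' are the automorphic images $g(F_k)$. Here $\An_k$ is the circulant on $\mathbb{Z}_{3k-1}$ with connection set $\{s:s\equiv 1\bmod 3\}$, so the rotations $g_s\colon v_i\mapsto v_{i+s}$ are automorphisms and already give $3k-1$ copies, which one may enlarge by the reflections. It is tempting to bound the inner minimum in \eqref{eq:minimax} by an average over $g$, but this is too weak: averaging replaces $\min_g q_g$ by one weighted quadratic $\sum_{v_iv_j\in E(\An_k)}c_{ij}x_ix_j$ whose mean edge weight is $|F_k|/|E(\An_k)|\to\tfrac16$, so its heaviest edge has weight at least $\tfrac16$ and, putting half the mass on each of that edge's endpoints, the maximum over $\Delta$ is at least $\tfrac14\cdot\tfrac16=\tfrac1{24}>\tfrac1{25}$. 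Hence the pointwise minimum is indispensable and \eqref{eq:minimax} is genuinely non-convex; equivalently, the assertion is the infeasibility of the quadratic system $\{q_g(x)>\tfrac1{25}\}_g$ on $\Delta$, which is the ``system of quadratic inequalities'' of the conjecture.

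To prove \eqref{eq:minimax} I would analyse a maximiser $x^\ast$ by Lagrange/KKT conditions, using the cyclic symmetry to normalise the set of copies attaining the minimum to be symmetric about one vertex; at $x^\ast$ the active quadratics have a common value and stationarity becomes a small linear system in the nonzero coordinates. The model is $k=2$: there $\An_2=C_5$, the copies are the five single edges, and $\min_i x_ix_{i+1}\le\bigl(\prod_i x_ix_{i+1}\bigr)^{1/5}=\bigl(\prod_i x_i\bigr)^{2/5}\le(5^{-5})^{2/5}=\tfrac1{25}$ by the arithmetic--geometric mean inequality, with equality only for the uniform distribution on the $C_5$. The aim is to show that for every $k$ the maximiser of \eqref{eq:minimax} is again supported on the five vertices of an induced $C_5$, whence the problem collapses to this computation. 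The hard part will be exactly this reduction: for $k\ge3$ each $q_g$ is a sum of $\floor{k^2/4}$ products rather than a single product, so the geometric-mean argument does not lift verbatim, and one must exclude ``spread-out'' maximisers using many vertices at once. I expect the decisive ingredient to be a rearrangement/concentration lemma asserting that displacing mass off an induced $C_5$ always creates, in some rotated copy $g(F_k)$, a deficit that strictly lowers $\min_g q_g$. Failing a clean reduction, one can instead seek an explicit Positivstellensatz certificate for the infeasibility of $\{q_g(x)>\tfrac1{25}\}_g$, written as a function of $k$; the cyclic symmetry makes this the most promising route to a uniform, $k$-independent argument.
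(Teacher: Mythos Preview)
The statement you are addressing is labelled a \emph{Conjecture} in the paper and is left open there; the paper offers no proof, only the suggestion contained in the wording (``several copies of the bipartification $F_k$'' and ``optimizing a system of quadratic inequalities''). So there is no paper proof to compare against, and your write-up should be read as an attempted resolution of an open problem rather than a reconstruction of an existing argument.

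Your reduction is sound and faithfully implements the conjecture's hint: pulling back the bipartition $A_k\cup B_k$ and its automorphic images to a preimage $H$ yields the quadratics $q_g$, and the assertion becomes the minimax inequality \eqref{eq:minimax}. The tightness via the balanced $C_5$ blow-up is correct, the observation that a plain average over $g$ only yields $1/24$ and hence cannot close the gap is both correct and useful, and the $k=2$ computation is complete. Where the proposal stops being a proof is exactly where you say it does: for $k\ge 3$ you do not establish that every maximiser of $\min_g q_g$ is supported on an induced $C_5$, nor do you supply the promised ``rearrangement/concentration lemma'' or an explicit Positivstellensatz certificate. The KKT set-up you sketch is reasonable, but the stationarity conditions for a pointwise minimum of several quadratics on a simplex do not by themselves force five-vertex support; one has to control which copies are active and rule out maximisers with larger support, and nothing in the proposal does this. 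As written, then, this is a coherent programme with a correctly identified hard step, not a proof of Conjecture~\ref{conj:hompre}.
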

Furthermore, by Corollary 4.2 in ~\cite{BraTho4col}, every triangle-free graph $G$ with $\delta(G)>\frac13 |G|$ 
is four-colourable, and by Theorem 1.2, Corollary 1.3 and Theorem 1.4 in ~\cite{Bra1999}, if the chromatic 
number \emph{is} indeed four, then such graphs must simultaneously contain a Petersen graph with 
one contracted edge, a Wagner graph (M\"{o}bius ladder with four rungs) and a Gr\"{o}tzsch graph 
as subgraphs.

Therefore, proving Conjecture ~\ref{conj:hompre} (and thereby settling the 
case of $\delta(G) > \frac13 |G|$ and $\chi(G)\leq 3$) would allow anyone 
interested in the Erd\H{o}s bipartification conjecture to assume that one of the following holds:
\begin{enumerate}
\item The minimum degree of the graph $G$ is at most $\frac13 |G|$. If $\delta(G)<\frac13 |G|$, 
then the chromatic number $G$ can be arbitrarily high (see Section 6 in  ~\cite{BraTho4col}).
\item The minimum degree of the graph $G$ is strictly larger than $\frac13 |G|$,  the chromatic 
number of $G$ is exactly four, and the graph $G$ contains a Petersen graph with one edge contracted, 
a Wagner graph, and a Gr\"{o}tzsch graph as subgraphs.
\end{enumerate}

\section*{Acknowledgements} The author is very grateful to Anusch Taraz for introducing 
him to the Erd\H{o}s bipartification conjecture and to both the TopMath program 
and the Department M9 for Applied Geometry and Discrete Mathematics of Technische 
Universit\"{a}t M\"{u}nchen for support and excellent working conditions.


\bibliographystyle{amsplain} \bibliography{andrasfaibipartification20090929}

\providecommand{\bysame}{\leavevmode\hbox to3em{\hrulefill}\thinspace}
\providecommand{\MR}{\relax\ifhmode\unskip\space\fi MR }
\providecommand{\MRhref}[2]{%
  \href{http://www.ams.org/mathscinet-getitem?mr=#1}{#2}
}
\providecommand{\href}[2]{#2}
\begin{thebibliography}{1}

\bibitem{BraLocal1998}
Stephan Brandt, \emph{The local density of triangle-free graphs}, Discrete
  Math. \textbf{183} (1998), 17--25.

\bibitem{Bra1999}
\bysame, \emph{On the structure of dense triangle-free graphs}, Combin. Probab.
  Comput. \textbf{8} (1999), 237--245.

\bibitem{BraTho4col}
Stephan Brandt and St\'{e}phan Thomass\'{e}, \emph{Dense triangle-free graphs
  are four-colorable}, To appear in J. Combin. Theory Ser. B.

\bibitem{ChenJinKoh1997}
C.C. Chen, G.P. Jin, and K.M. Koh, \emph{Triangle-free graphs with large
  degree}, Combin. Probab. Comput. \textbf{6} (1997), 381--396.

\bibitem{Diestel}
Reinhard Diestel, \emph{Graph theory}, third ed., Graduate Texts in
  Mathematics, vol. 173, Springer, 2005.

\bibitem{GodRoy}
Chris Godsil and Gordon Royle, \emph{Algebraic graph theory}, first ed.,
  Graduate Texts in Mathematics, vol. 207, Springer, 2001.

\bibitem{SudK4}
Benjamin Sudakov, \emph{Making a {$K_4$}-free graph bipartite}, Combinatorica
  \textbf{27} (2007), no.~4, 509--518.

\end{thebibliography}


\end{document}